% -*- mode: LaTeX; -*-
\def\timestamp{%
Time-stamp: <C-embedding.tex: Saturday 24-08-2024 at 16:42:22 (cest)>}
\def\stripname Time-stamp: <#1: #2 #3 at #4 #5>{#3/#4 (#1)}
\edef\filedate{\expandafter\stripname\timestamp}
\documentclass[a4paper]{amsart}

%
% Sets
%
\DeclareMathSymbol\N \mathord{AMSb}{`N}
\DeclareMathSymbol\Q \mathord{AMSb}{`Q}
\DeclareMathSymbol\R \mathord{AMSb}{`R}
\DeclareMathSymbol\Sor\mathord{AMSb}{`S}
\DeclareMathSymbol\Z\mathord{AMSb}{`Z}
\newcommand\calF{\mathcal{F}}
\newcommand\calN{\mathcal{N}}
\newcommand\calU{\mathcal{U}}
\newcommand\calZ{\mathcal{Z}}
%
% cardinals
%
\newcommand\cee{\mathfrak{c}}
%
% operators 'n stuff
%
\newcommand\cl{\operatorname{cl}}
\newcommand\clbeta[1][X]{\cl_{\beta #1}}
\newcommand\orpr[2]{\langle{#1},{#2}\rangle}
\newcommand\preim{^\gets}
\DeclareMathSymbol\restr\mathbin{AMSa}{"16}
\DeclareMathSymbol\le    \mathrel{AMSa}{"36}
\DeclareMathSymbol\ge    \mathrel{AMSa}{"3E}
%
% theorems
%
\theoremstyle{plain}
\newtheorem{theorem}{Theorem}[section]
\newtheorem{lemma}[theorem]{Lemma}
\newtheorem{corollary}[theorem]{Corollary}
\theoremstyle{definition}
\newtheorem{example}[theorem]{Example}
\theoremstyle{remark}
\newtheorem{question}{Question}
\newtheorem{remark}[theorem]{Remark}
%
% refs
%
\usepackage{amsrefs}

\begin{document}

\title{$C$-embedding, Lindel\"ofness, and \v{C}ech-completeness}

\author[A. Dow]{Alan Dow}
\address{Department of Mathematics\\
         UNC-Charlotte\\
         9201 University City Blvd. \\
         Charlotte, NC 28223-0001}
\email{adow@charlotte.edu}
\urladdr{https://webpages.uncc.edu/adow}

\author[K. P. Hart]{Klaas Pieter Hart}
\address{Faculty EEMCS\\TU Delft\\
         Postbus 5031\\2600~GA {} Delft\\the Netherlands}
\email{k.p.hart@tudelft.nl}
\urladdr{https://fa.ewi.tudelft.nl/\~{}hart}

\author[J. van Mill]{Jan van Mill}
\address{KdV Institute for Mathematics\\
         University of Amsterdam\\
         P.O. Box 94248\\
         1090~GE {} Amsterdam\\
         The Netherlands}
\email{j.vanmill@uva.nl}
\urladdr{https://staff.fnwi.uva.nl/j.vanmill/}

\author[J. Vermeer]{Hans Vermeer}
\address{Faculty EEMCS\\TU Delft\\
         Postbus 5031\\2600~GA {} Delft\\the Netherlands}
\email{j.vermeer@tudelft.nl}

\dedicatory{In memory of Gary Gruenhage}

\begin{abstract}
We show that in the class of Lindel\"of \v{C}ech-complete spaces
the property of being $C$-embedded is quite well-behaved.
It admits a useful characterization that can be used to show that products
and perfect preimages of $C$-embedded spaces are again $C$-embedded.
We also show that both properties, Lindel\"of and \v{C}ech-complete,
are needed in the product result.
\end{abstract}

\subjclass{Primary 54C45; 
           Secondary: 54D20, 54D35, 54D40, 54D60, 54G20}
\keywords{$C$-embedding, Lindel\"of, \v{C}ech-complete, product, 
          perfect preimage}

\date\filedate

\maketitle

\section*{Introduction}

In \cite{MR4634391}  we investigated whether
in realcompact spaces there could be closed, countable, and discrete
subspaces (closed copies of the space~$\N$ of natural numbers)
that were $C^*$-embedded but not $C$-embedded, or even 
not $C^*$-embedded.
In the fol\-low-up paper \cite{arxiv:2307.07223} we looked for the smallest
power of the real line~$\R$ that could contain such closed copies of~$\N$.

In the present paper we consider more general spaces.
It appears that the members of the class of Lindel\"of \v{C}ech-complete spaces
behave much like~$\N$ as regards $C$-embedding.
Our positive results characterize $C$-embedding and allow us to conclude
that, in this class, $C$-embedding is preserved by products and perfect 
preimages.
We also show, by means of examples, that neither assumption, Lindel\"ofness
nor \v{C}ech-completeness, can be dropped in these results.

\section{Preliminaries}
\label{sec.prelim}

All spaces in this paper are assumed to be, at least, Tychonoff spaces.
The books~\cites{MR0407579,MR1039321} are our primary sources for all 
undefined topological notions.

\subsection{$C$-embedding}
It behoves us to define the central notion of this paper, that of $C$-embedding.

A subspace~$A$ of a space~$X$ is said to be $C$-embedded in~$X$ if every
continuous function from~$A$ to~$\R$ admits a continuous extension to~$X$.

In~\cite{MR0407579}*{Theorem~1.18}, it is shown that $A$~is 
$C$-embedded in~$X$ if and only if
\begin{enumerate}
\item it is $C^*$-embedded in~$X$, that is, every \emph{bounded} continuous
      function from~$A$ to~$\R$ admits a continuous extension to~$X$, 
      \label{item.Cstar} and
\item every zero-set $Z$ that is disjoint from~$A$ is 
      \emph{completely separated}
      from~$A$, that is, there is a continuous function $f:X\to\R$
      such that $f(a)=0$ when $a\in A$ and $f(z)=1$ when $z\in Z$. 
      \label{item.compsep} 
\end{enumerate}

We shall use this equivalence in our proofs as well as the following
characterization of~\eqref{item.Cstar}: if $Z_1$ and $Z_2$ are disjoint 
zero-sets of~$A$ then they are completely separated in~$X$,
see~\cite{MR0407579}*{Theorem~1.17}.

Also, given this equivalence one can weaken~\eqref{item.Cstar} to:
$A$~is $z$-embedded, meaning that for every zero-set~$Z$ of~$A$, there
is a zero-set~$Z^+$ of~$X$ such that $Z=A\cap Z^+$.
The point is that if $Z_1$ and $Z_2$ are disjoint zero-sets of~$A$ then
the intersection $Z_1^+\cap Z_2^+$ is a zero-set that is disjoint from~$A$;
then~\eqref{item.compsep} lets us make~$Z_1^+$ and~$Z_2^+$ a bit smaller
so that they become disjoint. 

\smallbreak
Below we freely use the diagonal embedding~$e$ of a space~$X$ into~$\R^{C(X)}$,
defined by $e(x)=\bigl<f(x):f\in C(X)\bigr>$.
It is well known that $e[X]$~is $C$-embedded in the product and that
$e[X]$~is closed whenever $X$~is realcompact.
Our examples are all realcompact, either because they are Lindel\"of,
or discrete and of small enough cardinality.
We recommend \cite{MR0407579}*{Chapter~8} and 
\cite{MR1039321}*{Section~3.11} for basic information on realcompactness.

\subsection{Rationals and irrationals}

A few of the examples in Section~\ref{sec.examples} use some facts about
the spaces of rational and irrational numbers, and completely metrizable
spaces, that we record here.

We let $\calN$ denote the zero-dimensional Baire space~$\N^\N$,
the product of countably many copies of the discrete space~$\N$,
denoted $B(\aleph_0)$ in~\cite{MR1039321}*{Example~4.2.12}.
In~\cite{MR1039321}*{Exercises~4.3.G and~4.3.H} we find the results that 
we shall use below: $\calN$~is homeomorphic to the subspace of irrational
numbers in~$\R$, and any two countable dense subsets of~$\R$ can be mapped 
to each other by an autohomeomorphism of~$\R$.

\subsection{Two technical results}\label{subsec.special}

In our examples we use Lavrentieff's theorem, 
\cite{MR1039321}*{Theorem~4.3.21}, which states that
if $X$ and $Y$ are completely metrizable, with subspaces~$A$ and~$B$ 
respectively, and $f:A\to B$ is a homeomorphism
then $f$~has an extension to a homeomorphism $\tilde f:\tilde A\to\tilde B$,
where $\tilde A$ and $\tilde B$ are $G_\delta$-sets.

\smallbreak
We also use a result due to various authors,
\cite{MR1039321}*{Problem~2.7.12\,(d)}: 
Let $\kappa$~be an infinite cardinal and let $f:X\to\R$ be continuous,
where $X$~is a product of a sequence $\langle X_\alpha:\alpha<\kappa\rangle$
of separable spaces.
Then there are a countable subset~$E$ of~$\kappa$ and a continuous function
$g:\prod_{\alpha\in E}X_\alpha\to\R$ such that $f=g\circ\pi_E$, 
where $\pi_E:X\to\prod_{\alpha\in E}X_\alpha$ is the projection 
--- in words: $f$~factors through a countable subproduct.

\section{Positive results}

We begin by giving an external characterization of closed subspaces
of Tychonoff spaces that are both Lindel\"of and $C$-embedded.

The following lemma characterizes $C$-embeddedness for arbitrary closed 
subsets.

\begin{lemma}\label{lemma.1st}
Let $A$ be a closed subset of a space $X$.
Then the following three conditions are equivalent.
\begin{enumerate}
\item $A$ is $C$-embedded in~$X$.
\item $A$ is $z$-embedded in~$X$ and for every zero-set $Z$ of~$\clbeta A$ 
      that is disjoint from~$A$ there is a zero-set~$Z^+$ of~$\beta X$ that 
      is disjoint from~$X$ and such that $Z=Z^+\cap\clbeta A$.
\item $A$ is $z$-embedded in~$X$ and for every zero-set $Z$ of~$\clbeta A$ 
      that is disjoint from~$A$ there is a countable family~$\calZ$ of 
      zero-sets of~$\beta X$ such that 
      $Z\subseteq\bigcup\calZ\subseteq\beta X\setminus X$.
\end{enumerate}
\end{lemma}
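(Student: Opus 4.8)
The plan is to prove the cycle of implications $(1)\Rightarrow(2)\Rightarrow(3)\Rightarrow(1)$. Of these $(2)\Rightarrow(3)$ is immediate: take $\calZ=\{Z^+\}$ and note that $Z\subseteq Z^+$ (because $Z=Z^+\cap\clbeta A$) and $Z^+\subseteq\beta X\setminus X$ (because $Z^+$ is disjoint from~$X$).

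For $(1)\Rightarrow(2)$ I would argue as follows. If $A$ is $C$-embedded in~$X$ then it is $C^*$-embedded, hence $z$-embedded. Let $Z=g\preim(0)$ be a zero-set of~$\clbeta A$ disjoint from~$A$, where $g\colon\clbeta A\to[0,1]$ is continuous; then $g$ is strictly positive on~$A$, so $1/(g\restr A)$ is a continuous map $A\to[1,\infty)$. Extend it over~$X$ using $C$-embeddedness and replace the extension by its pointwise maximum with the constant~$1$, obtaining $h\colon X\to[1,\infty)$ with $h\restr A=1/(g\restr A)$. Then $1/h\colon X\to(0,1]$ is a \emph{bounded} extension of $g\restr A$, so it has a \v{C}ech--Stone extension $H\colon\beta X\to[0,1]$. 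Since $H$ and $g$ agree on the dense subset~$A$ of~$\clbeta A$ they agree on all of~$\clbeta A$, so $Z^+:=H\preim(0)$ is a zero-set of~$\beta X$ with $Z^+\cap\clbeta A=Z$; and $Z^+\cap X=(1/h)\preim(0)=\emptyset$ because $1/h$ is strictly positive on~$X$.

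For $(3)\Rightarrow(1)$ I would first reduce the goal. By the characterization of $C$-embeddedness recalled above --- with $C^*$-embeddedness weakened to $z$-embeddedness as in the remark following it --- and since $A$ is $z$-embedded by hypothesis, it suffices to prove that every zero-set $Z_0$ of~$X$ disjoint from~$A$ is completely separated from~$A$ in~$X$, that is, that $\clbeta A\cap\clbeta{Z_0}=\emptyset$. Write $Z_0=\phi\preim(0)$ with $\phi\colon X\to[0,1]$ and let $\Phi\colon\beta X\to[0,1]$ extend~$\phi$. Since $\clbeta{Z_0}\subseteq\Phi\preim(0)$, the set $Z:=\Phi\preim(0)\cap\clbeta A$ is a zero-set of~$\clbeta A$ (the trace of the zero-set~$\Phi\preim(0)$ of~$\beta X$), it is disjoint from~$A$, and it contains $\clbeta A\cap\clbeta{Z_0}$. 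Apply~(3) to~$Z$ to obtain a countable family $\calZ=\{W_n:n\in\N\}$ of zero-sets of~$\beta X$ with $Z\subseteq\bigcup_n W_n\subseteq\beta X\setminus X$.

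The principal step is to turn this exterior cover into an honest separation, which I would do in two moves. First, \emph{using the $z$-embeddedness of~$A$}, collapse the countable family to a single zero-set: produce a zero-set $Z^+$ of~$\beta X$ with $Z\subseteq Z^+\subseteq\beta X\setminus X$ (this is precisely the assertion of~(2) for this~$Z$). Once that is done, the open set $U:=\beta X\setminus Z^+$ contains~$X$, and being a cozero-set of the compact space~$\beta X$ it is $\sigma$-compact, hence Lindel\"of and normal. Moreover $\clbeta A\cap U$ and $\clbeta{Z_0}\cap U$ are disjoint closed subsets of~$U$: their intersection lies in $Z\cap U$, which is empty since $Z\subseteq Z^+=\beta X\setminus U$. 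A Urysohn function for these two closed sets in the normal space~$U$, restricted to~$X\subseteq U$, vanishes on~$A$ and equals~$1$ on~$Z_0$, which is the desired complete separation. I expect the collapsing move --- extracting, from a countable cover of~$Z$ by zero-sets of~$\beta X$ missing~$X$, a single such zero-set containing~$Z$ --- to be the real obstacle: a countable union of zero-sets of~$\beta X$ disjoint from~$X$ need not lie inside a single such zero-set, so the $z$-embeddedness of~$A$ has to enter essentially at this point.
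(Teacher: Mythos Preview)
Your proofs of $(1)\Rightarrow(2)$ and $(2)\Rightarrow(3)$ are correct and match the paper's (your $(1)\Rightarrow(2)$ is in fact a bit more careful than the paper's about why the extension can be taken strictly positive on~$X$).

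For $(3)\Rightarrow(1)$ your setup is exactly right and coincides with the paper's up to and including the production of the countable family~$\{W_n\}$ with $Z\subseteq\bigcup_n W_n\subseteq\beta X\setminus X$. The gap is the ``collapsing move'': you correctly sense that it is the obstacle, but it is an obstacle you have created for yourself, and $z$-embeddedness of~$A$ will not help you perform it. The paper simply bypasses it. Instead of passing to a single cozero-set~$U=\beta X\setminus Z^+$, work directly with
\[
L=\beta X\setminus\bigcup_n W_n=\bigcap_n(\beta X\setminus W_n).
\]
This is a countable intersection of cozero-sets of the compact space~$\beta X$, and such a set is Lindel\"of (this is the lemma of Henriksen--Isbell--Johnson, \cite{MR0133350}*{Lemma~2.2}, or \cite{MR1039321}*{Exercise~3.8.F}; one quick way to see it is that the diagonal map $x\mapsto(f_n(x))_n$ into~$[0,1]^\omega$, where $W_n=f_n\preim(0)$, restricts to a perfect map from~$L$ onto a subspace of the separable metrizable space~$(0,1]^\omega$). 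Hence $L$~is normal, and $X\subseteq L$. Your own Urysohn argument now applies verbatim with~$L$ in place of~$U$: the sets $\clbeta A\cap L$ and $\Phi\preim(0)\cap L$ (or $\clbeta{Z_0}\cap L$) are disjoint closed subsets of~$L$, because their intersection is contained in $Z\cap L=\emptyset$, so a Urysohn function on~$L$ restricted to~$X$ separates~$A$ from~$Z_0$. The $z$-embeddedness hypothesis is used only where you first invoked it, to reduce $C$-embeddedness to complete separation from disjoint zero-sets.
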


\begin{proof}
To prove that (1) implies~(2) we take a zero-set $Z$ of~$\clbeta A$ and
construct~$Z^+$, as follows.
Let $f:\clbeta A\to[0,1]$ be continuous such that $Z=\{a\in\clbeta A:f(a)=0\}$
and consider its restriction $f\restr A$ to~$A$; 
this is a function from~$A$ to~$(0,1]$.
By $C$-embeddedness we have an extension~$F:X\to(0,1]$ of~$f\restr A$, which
we then extend to $\beta F:\beta X\to[0,1]$.
Then let $Z^+$ be the zero-set of~$\beta F$.

\smallskip
That (2) implies (3) is clear so we turn to proving that (3) implies~(1).

We already know that $A$~is $z$-embedded in~$X$, so let $Z$~be a zero-set 
of~$X$ that is disjoint from~$A$; we show that $Z$ and~$A$ are completely 
separated.
Let $Z_\beta$ be a zero-set of~$\beta X$ such that $Z=X\cap Z_\beta$ and
let $Z_A=Z_\beta\cap\clbeta A$.
Then $Z_A$~is a zero-set of~$\clbeta A$ that is disjoint from~$A$.

Let $\calZ$ be a countable family of zero-sets of~$\beta X$ as in the 
assumption.
Say $\calZ=\{Z_n:n\in\omega\}$, and for every~$n$ 
let $C_n=\beta X\setminus Z_n$ be the complementary cozero-set.

Then $X\subseteq L=\bigcap_{n\in\omega}C_n$, and by~\cite{MR0133350}*{Lemma~2.2} 
(or \cite{MR1039321}*{Exercise~3.8.F}) 
the space $L$~is Lindel\"of.
In addition the sets $Z_\beta\cap L$ and $\clbeta A\cap L$ are closed
and disjoint in~$L$; as $L$~is normal these sets are completely separated 
in~$L$, and so $Z$ and~$A$ are completely separated in~$X$, 
because $X\subseteq L\subseteq\beta X$. 
\end{proof}

Using this lemma we get our principal result.

\begin{theorem}\label{thm.first}
Let $A$ be a closed subset of a space $X$.
Then the following three conditions are equivalent.
\begin{enumerate}
\item $A$ is Lindel\"of and $C$-embedded in~$X$.
\item For every compact subset $K$ of $\clbeta A\setminus A$ there is a 
      zero-set~$Z$ of~$\beta X$ such that 
      $K\subseteq Z\subseteq\beta X\setminus X$.
\item For every compact subset $K$ of $\clbeta A\setminus A$ there is a 
      countable family $\calZ$ of zero-sets of~$\beta X$ such that 
      $K\subseteq\bigcup\calZ\subseteq\beta X\setminus X$.
\end{enumerate}
\end{theorem}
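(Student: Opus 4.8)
The plan is to run everything through Lemma~\ref{lemma.1st}: conditions~(2) and~(3) of the theorem are the ``compact'' counterparts of conditions~(2) and~(3) of that lemma, so the proof should consist of transferring a separating zero-set between $\clbeta A$ and $\beta X$ and of supplying Lindel\"ofness as the extra ingredient. The implication (2)$\Rightarrow$(3) is trivial, and (3)$\Rightarrow$(2) will drop out of the work done for (3)$\Rightarrow$(1).

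For (1)$\Rightarrow$(2), fix a compact $K\subseteq\clbeta A\setminus A$. I would first manufacture a zero-set $W$ of $\clbeta A$ with $K\subseteq W\subseteq\clbeta A\setminus A$, and this is where Lindel\"ofness of~$A$ enters: for each $a\in A$ complete regularity of $\clbeta A$ gives a continuous $\varphi_a\colon\clbeta A\to[0,1]$ with $\varphi_a(a)=1$ and $\varphi_a[K]=\{0\}$; the cozero-sets $\{x:\varphi_a(x)>\tfrac12\}$ cover~$A$, a countable subfamily indexed by some $\langle a_n:n\in\omega\rangle$ still covers~$A$, and $W=\{x:\sum_n2^{-n}\varphi_{a_n}(x)=0\}$ is the zero-set we want. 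Since $W$ is a zero-set of $\clbeta A$ disjoint from~$A$, $C$-embeddedness of~$A$ lets us apply Lemma~\ref{lemma.1st}, implication (1)$\Rightarrow$(2), to get a zero-set $Z^+$ of $\beta X$ with $Z^+\cap X=\emptyset$ and $Z^+\cap\clbeta A=W$; then $K\subseteq W\subseteq Z^+\subseteq\beta X\setminus X$, which is~(2).

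For (3)$\Rightarrow$(1) --- the substance of the theorem --- I would verify the three clauses needed to invoke Lemma~\ref{lemma.1st}, implication (3)$\Rightarrow$(1): that every zero-set of $\clbeta A$ disjoint from~$A$ is covered by a countable family of zero-sets of $\beta X$ missing~$X$, that $A$ is $z$-embedded in~$X$, and --- as a bonus --- that $A$ is Lindel\"of. The first clause is immediate, since a zero-set of the compact space $\clbeta A$ is compact, so hypothesis~(3) applies to it verbatim. For Lindel\"ofness, take an open cover $\calU$ of~$A$, choose open sets $V_U$ of $\clbeta A$ with $V_U\cap A=U$, and put $V=\bigcup_{U\in\calU}V_U$; then $K=\clbeta A\setminus V$ is a compact subset of $\clbeta A\setminus A$, so~(3) supplies zero-sets $Z_n$ of $\beta X$ with $K\subseteq\bigcup_nZ_n\subseteq\beta X\setminus X$. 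The set $L=\bigcap_n(\clbeta A\setminus Z_n)$ then satisfies $A\subseteq L\subseteq V$, and, being a countable intersection of cozero-sets of the compact space $\clbeta A$, it is Lindel\"of by the same result --- \cite{MR0133350}*{Lemma~2.2} or \cite{MR1039321}*{Exercise~3.8.F} --- used in Lemma~\ref{lemma.1st}; hence countably many of the $V_U$ cover~$L$, and the matching members of $\calU$ cover~$A$. Finally, $z$-embedding follows from Lindel\"ofness by a standard argument: if $Z$ is a zero-set of~$A$, it is closed in~$X$ because $A$ is, so every point of the Lindel\"of set $A\setminus Z$ lies in a cozero-set of~$X$ that misses~$Z$; countably many of these cover $A\setminus Z$, and their union~$G$ is a cozero-set of~$X$ with $G\cap A=A\setminus Z$, so $Z=A\cap(X\setminus G)$ exhibits~$Z$ as a trace of a zero-set of~$X$. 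With all three clauses in hand, Lemma~\ref{lemma.1st} gives that $A$ is $C$-embedded; with its Lindel\"ofness this is~(1), and since (1)$\Rightarrow$(2) we also obtain (3)$\Rightarrow$(2).

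The one real obstacle is the Lindel\"ofness half of (3)$\Rightarrow$(1): one must see that an open cover of~$A$ should be pushed out to $\clbeta A$, where the leftover $\clbeta A\setminus V$ is \emph{compact} and hence within the reach of hypothesis~(3), and that the resulting co-$\sigma$-compact $G_\delta$ set~$L$, squeezed between~$A$ and~$V$, is Lindel\"of and so returns a countable subcover. Everything else --- the two easy implications, the passage through Lemma~\ref{lemma.1st}, and the fact that Lindel\"of subspaces are $z$-embedded --- is routine once this step is in place.
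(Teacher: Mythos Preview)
Your proof is correct and close in spirit to the paper's, but the two arguments are organized differently in two places worth noting.

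For (1)$\Rightarrow$(2) you first build a zero-set $W$ of $\clbeta A$ sandwiched between $K$ and $\clbeta A\setminus A$, and then invoke Lemma~\ref{lemma.1st} to lift $W$ to a zero-set of $\beta X$ missing~$X$. The paper instead carries out the construction directly on~$\beta X$: it chooses the functions $f_a$ on $\beta X$ rather than on $\clbeta A$, so that the resulting $g=\sum 2^{-n}f_{a_n}$ already lives on $\beta X$; the defect that $g^{\gets}(0)$ may meet~$X$ is then repaired by adding $\beta f$ for a function $f$ that completely separates $A$ from $\{x\in X:g(x)=0\}$. Your route is more modular (it reuses Lemma~\ref{lemma.1st} as a black box), the paper's is more self-contained.

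For the $z$-embedding part of (3)$\Rightarrow$(1) you use the standard fact that a Lindel\"of subspace of a Tychonoff space is $z$-embedded. The paper takes a different tack and proves the stronger statement that $A$ is $C^*$-embedded in $\clbeta A$ --- equivalently, that $\clbeta A=\beta A$ --- by showing that disjoint closed subsets $E$, $F$ of~$A$ have disjoint closures in $\clbeta A$: one sets $K=\clbeta E\cap\clbeta F$, applies~(3), and uses normality of the resulting Lindel\"of $L$ with $X\subseteq L\subseteq\beta X$ to conclude $K=\emptyset$. So the paper extracts a bit more information, while your argument is shorter and entirely adequate for the theorem as stated. The Lindel\"ofness arguments are essentially identical.
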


\begin{proof}
To prove that (1) implies (2) we let $K$ be a compact subset 
of $\clbeta A\setminus A$ and find a zero-set $Z$ of $\beta X$ such 
that $K\subseteq Z\subseteq\beta X\setminus X$.

To begin we choose for every $a\in A$ a continuous function 
$f_a:\beta X\to[0,1]$ such that $f_a(a)=1$ and $f_a(x)=0$ if $x\in K$.
For each $a$ we let $U_a=f\preim\bigl[(\frac12,1]\bigr]$.
There is a countable subset $\{a_n:n\in\omega\}$ of~$A$ such 
that $A\subseteq\bigcup_{n\in\omega}U_{a_n}$.

Let $g=\sum_{n\in\omega}2^{-n}f_{a_n}$.
Then $g$~is continuous, $g(a)>0$ when $a\in A$, and $g(x)=0$ when $x\in K$.
We would like to let $Z=g\preim(0)$, but that set may intersect~$X$.
However, $S=\{x\in X:g(x)=0\}$ is a zero-set of~$X$ that is disjoint from~$A$.
As $A$~is $C$-embedded in~$X$ the sets $S$ and $A$ are completely separated.
Let $f:X\to[0,1]$ be continuous such that $f(x)=1$ if $x\in S$ and $f(a)=0$
if $a\in A$.
Note that $\beta f$ vanishes on $\clbeta A$ and in particular on~$K$.

Now let $h=g+\beta f$.
Then $h(x)\ge g(x)>0$ when $x\in X\setminus S$ and
$h(x)\ge1$ when $x\in S$.
Also, $h(x)=0$ when $x\in K$.
It follows that $h\preim(0)$ is the zero-set of~$\beta X$ that we seek.

\medskip
Clearly (2) implies (3).

\medskip
We finish by proving that (3) implies (1).   
To begin: the present condition~(3) is stronger than the second part of~(3)
in Lemma~\ref{lemma.1st}.
We need to show that $A$~is Lindel\"of and $z$-embedded in~$X$.
It will actually be simpler to show that $A$~is $C^*$-embedded in~$\clbeta A$.

\smallskip
That $A$~is Lindel\"of is proved as follows.
Let $\calU$ be a collection of open subsets of~$\beta X$ that covers~$A$.
Let $K=\clbeta A\setminus\bigcup\calU$ and let $\calZ$ be a countable
family of zero-sets of~$\beta X$ as in the assumption.

As in the proof of Lemma~\ref{lemma.1st} the complement~$L$ of
$\bigcup\calZ$ in~$\beta X$ is Lindel\"of and it contains~$X$.
Then $L\cap\clbeta A$ is Lindel\"of as well and, moreover,
contained in~$\bigcup\calU$.
But $A\subseteq L\cap\clbeta A$, so $A$~is covered by a countable
subfamily of~$\calU$.

\smallskip
To see that $A$~is $C^*$-embedded in~$\clbeta A$, and hence in~$\beta X$,
we show that if $E$ and $F$ are disjoint closed subsets of~$A$ then their 
closures in~$\clbeta A$ are disjoint; this shows that $\clbeta A$ actually
\emph{is} the \v{C}ech-Stone compactification of the normal space~$A$.

\smallskip
Let $E$ and~$F$ be as above and let $K=\clbeta E\cap\clbeta F$.
Then $K\subseteq\clbeta A\setminus A$ and hence there is a
countable family~$\calZ$ of zero-sets of~$\beta X$ as in our assumption.

We have just established that $L=\beta X\setminus\bigcup\calZ$ 
is Lindel\"of, hence $L$~is normal as well.
Also $X\subseteq L\subseteq\beta X$, and so $\beta L=\beta X$.

In addition we have $\cl_LE\cap\cl_LF=\emptyset$ and hence 
$\clbeta E\cap\clbeta F=\emptyset$ (so in hindsight $K=\emptyset$).
\end{proof}

There are two special cases of this result that are worth recording here.
They consider Lindel\"of subspaces that are locally compact or 
\v{C}ech-complete.

\begin{theorem}
Let $A$ be a closed and locally compact subset of~$X$.
Then the following three conditions are equivalent.
\begin{enumerate}
\item $A$ is Lindel\"of and $C$-embedded in~$X$.
\item There is a zero-set~$Z$ of~$\beta X$ such that 
      $\clbeta A\setminus A\subseteq Z\subseteq\beta X\setminus X$.
\item There is a countable family $\calZ$ of zero-sets of~$\beta X$ such that 
      $\clbeta A\setminus A\subseteq\bigcup\calZ\subseteq\beta X\setminus X$.
\end{enumerate}
\end{theorem}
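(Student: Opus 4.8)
The plan is to deduce this from Theorem~\ref{thm.first} by showing that the extra hypothesis of local compactness makes $\clbeta A\setminus A$ \emph{itself} compact. Once that is known, the family of compact subsets of $\clbeta A\setminus A$ has a largest member, namely $\clbeta A\setminus A$, and the universally quantified conditions (2) and (3) of Theorem~\ref{thm.first} collapse to the single statements about $\clbeta A\setminus A$ that appear in (2) and (3) of the present theorem. Condition~(1) is literally the same in both theorems.

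So first I would verify that $A$ is open in $\clbeta A$; this is the standard fact that a locally compact subspace of a Hausdorff space is open in its closure. Given $a\in A$, choose an open set $U$ of $A$ with $a\in U$ and $\cl_A U$ compact, and write $U=V\cap A$ with $V$ open in $\clbeta A$. Since $A$ is dense in $\clbeta A$, the set $U$ is dense in $V$, so $V\subseteq\clbeta U$. On the other hand $\cl_A U$ is compact, hence closed in $\beta X$, and $U$ is dense in $\clbeta U$; therefore $\cl_A U=\clbeta U$, and we get $V\subseteq\clbeta U=\cl_A U\subseteq A$. Thus $A$ is open in the compact space $\clbeta A$, so $K_0:=\clbeta A\setminus A$ is closed in $\clbeta A$ and hence compact.

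With this in place the rest is bookkeeping. Condition~(2) here says there is a zero-set $Z$ of $\beta X$ with $K_0\subseteq Z\subseteq\beta X\setminus X$; this is equivalent to condition~(2) of Theorem~\ref{thm.first}, the reverse implication being the instance $K=K_0$ and the forward implication holding because any compact $K\subseteq\clbeta A\setminus A$ satisfies $K\subseteq K_0\subseteq Z$. Exactly the same remark turns condition~(3) here into condition~(3) of Theorem~\ref{thm.first}, and condition~(1) is common to both. Hence the equivalence follows directly from Theorem~\ref{thm.first}. (The degenerate case $A=\emptyset$ is trivial, with the empty zero-set.) The only step carrying genuine content is the verification that $\clbeta A\setminus A$ is compact; I do not anticipate any real difficulty there beyond stating the local-compactness fact cleanly.
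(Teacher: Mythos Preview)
Your proposal is correct and follows exactly the paper's approach: the paper's entire proof is the one-line observation that $\clbeta A\setminus A$ is closed (hence compact), so conditions~(2) and~(3) of Theorem~\ref{thm.first} need only be checked for that single set. You simply spell out in detail the standard fact that a locally compact space is open in its closure, which the paper takes for granted.
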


\begin{proof}
The set $\clbeta A\setminus A$ is closed and hence compact.
Therefore it is necessary and sufficient to assume or establish~(2) 
and~(3) for that set only.  
\end{proof}

\begin{theorem}\label{thm.Cech}
Let $A$ be a closed and \v{C}ech-complete subset of~$X$.
Then the following conditions are equivalent.
\begin{enumerate}
\item $A$ is Lindel\"of and $C$-embedded in~$X$.
\item There is a countable family $\calZ$ of zero-sets of~$\beta X$ such that 
      $\clbeta A\setminus A\subseteq\bigcup\calZ\subseteq\beta X\setminus X$.
\end{enumerate}
\end{theorem}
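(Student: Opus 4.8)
The plan is to reduce both implications to Theorem~\ref{thm.first}, using \v{C}ech-completeness only to pass between ``every compact piece of the remainder'' and ``the whole remainder''.

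The implication $(2)\Rightarrow(1)$ is immediate. If $\calZ$ is a countable family of zero-sets of~$\beta X$ with $\clbeta A\setminus A\subseteq\bigcup\calZ\subseteq\beta X\setminus X$, then for \emph{every} compact set $K\subseteq\clbeta A\setminus A$ that same family satisfies $K\subseteq\bigcup\calZ\subseteq\beta X\setminus X$. Thus condition~(3) of Theorem~\ref{thm.first} holds, and that theorem yields that $A$ is Lindel\"of and $C$-embedded in~$X$.

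For $(1)\Rightarrow(2)$ I would first note that, being $C$-embedded and hence $C^*$-embedded in~$X$, the set~$A$ satisfies $\clbeta A=\beta A$; so \v{C}ech-completeness of~$A$ says exactly that $A$ is a $G_\delta$-subset of~$\clbeta A$ (if one prefers not to use $\clbeta A=\beta A$, one can instead invoke the characterization of \v{C}ech-completeness as being a $G_\delta$ in \emph{every} compactification, \cite{MR1039321}*{\S3.9}). Write $\clbeta A\setminus A=\bigcup_{n\in\omega}K_n$ with each~$K_n$ closed in~$\clbeta A$, hence compact. Applying the implication $(1)\Rightarrow(2)$ of Theorem~\ref{thm.first} to each~$K_n$ produces a zero-set~$Z_n$ of~$\beta X$ with $K_n\subseteq Z_n\subseteq\beta X\setminus X$, and then $\calZ=\{Z_n:n\in\omega\}$ is as required, since $\clbeta A\setminus A=\bigcup_{n}K_n\subseteq\bigcup_{n}Z_n\subseteq\beta X\setminus X$.

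Since essentially all the content already sits in Theorem~\ref{thm.first}, I do not expect a real obstacle; the only step needing a word of care is the passage from \v{C}ech-completeness of the abstract space~$A$ to an $F_\sigma$ decomposition of its remainder inside the particular compactification $\clbeta A\subseteq\beta X$ (rather than inside~$\beta A$). It is also worth remarking why no single-zero-set version appears here, in contrast with the locally compact case: a \v{C}ech-complete space that is not locally compact has a non-compact remainder $\clbeta A\setminus A$, so a single zero-set of~$\beta X$ trapped between $\clbeta A\setminus A$ and $\beta X\setminus X$ cannot in general exist, and countably many zero-sets are genuinely needed.
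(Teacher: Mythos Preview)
Your argument is correct and follows the same approach as the paper: write $\clbeta A\setminus A$ as a countable union of compact sets using \v{C}ech-completeness, then apply Theorem~\ref{thm.first} to each piece (and conversely use a single family~$\calZ$ for all compact~$K$). Your extra care in justifying that $A$ is a $G_\delta$ in the particular compactification $\clbeta A$ (either via $C^*$-embedding or via the ``every compactification'' characterization) is appropriate, and your closing remark about why no single-zero-set clause appears is a nice observation but not needed for the proof itself.
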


\begin{proof}
By the definition of \v{C}ech-completeness
the set $\clbeta A\setminus A$ is an $F_\sigma$-subset of~$\clbeta A$.
One applies (2) or~(3) in Theorem~\ref{thm.first} to the countably many
closed sets whose union is~$\clbeta A\setminus A$ to obtain the desired
cover.
\end{proof}

From these characterizations we deduce two results about the preservation
of $C$-embeddedness.

\begin{theorem}\label{thm.product}
Let $\langle X_i:i<k\rangle$ be a sequence of spaces, 
where $k$~is a finite ordinal or~$\omega$, 
and let $\langle A_i:i<k\rangle$ be a corresponding sequence of 
$C$-embedded subspaces ($A_i$ of~$X_i$) that are closed, 
Lindel\"of, and \v{C}ech-complete.
Then the product $\prod_{i<k}A_i$ is 
Lindel\"of, \v{C}ech-complete, and $C$-embedded in~$\prod_{i<k}X_i$.
\end{theorem}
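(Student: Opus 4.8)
The plan is to reduce the whole statement to Tietze's extension theorem, by replacing each~$X_i$ with a conveniently chosen Lindel\"of superspace in which $A_i$ remains closed. Two of the three conclusions are almost immediate. Write $P=\prod_{i<k}A_i$. This is closed in $\prod_{i<k}X_i$, being a product of closed sets, and it is \v{C}ech-complete: choosing for each~$i$ a compactification $c_iA_i$ and open sets $G^i_n\subseteq c_iA_i$ with $A_i=\bigcap_{n\in\omega}G^i_n$, one gets $P=\bigcap_{i<k}\bigcap_{n\in\omega}\pi_i\preim[G^i_n]$ inside the compact space $\prod_{i<k}c_iA_i$, which exhibits~$P$ as a countable intersection of open sets because $k\le\omega$. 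That $P$~is Lindel\"of will come out of the construction below.

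For each~$i$ I apply Theorem~\ref{thm.Cech} to $A_i\subseteq X_i$: as $A_i$~is closed, \v{C}ech-complete, Lindel\"of and $C$-embedded, there is a countable family $\{Z^i_n:n\in\omega\}$ of zero-sets of~$\beta X_i$ with $\clbeta[X_i]A_i\setminus A_i\subseteq\bigcup_{n\in\omega}Z^i_n\subseteq\beta X_i\setminus X_i$. Put $L_i=\beta X_i\setminus\bigcup_{n\in\omega}Z^i_n$. Then $X_i\subseteq L_i$; the set $L_i$ is a countable intersection of cozero-sets of the compact space~$\beta X_i$, hence Lindel\"of by \cite{MR0133350}*{Lemma~2.2}, just as in the proof of Lemma~\ref{lemma.1st}; and $A_i=\clbeta[X_i]A_i\cap L_i$, because every point of $\clbeta[X_i]A_i\setminus A_i$ lies in some~$Z^i_n$ while $A_i$ misses each~$Z^i_n$. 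In particular $A_i$ is closed in~$L_i$.

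Now set $L=\prod_{i<k}L_i$, viewed as a subspace of the compact space $Y=\prod_{i<k}\beta X_i$. In~$Y$ we have $L=\bigcap_{i<k}\bigcap_{n\in\omega}\bigl(Y\setminus\pi_i\preim[Z^i_n]\bigr)$, and since each $\pi_i\preim[Z^i_n]$ is a zero-set of~$Y$ and $k\le\omega$, the set~$L$ is again a countable intersection of cozero-sets of a compact space, hence Lindel\"of, hence normal. The set $P=\prod_{i<k}A_i$ is closed in~$L$, being a product of the closed sets $A_i\subseteq L_i$; therefore $P$~is Lindel\"of, and, $L$~being normal, $P$~is $C$-embedded in~$L$ by Tietze's theorem. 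Finally $P\subseteq\prod_{i<k}X_i\subseteq L$, and $C$-embeddedness is inherited by intermediate subspaces --- a continuous real-valued function on~$P$ extends over~$L$, and the restriction of that extension to $\prod_{i<k}X_i$ is the desired extension --- so $P$ is $C$-embedded in $\prod_{i<k}X_i$.

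The one step that needs genuine insight is the passage from~$X_i$ to~$L_i$. A frontal attack using the canonical surjection $\beta\bigl(\prod_{i<k}X_i\bigr)\to\prod_{i<k}\beta X_i$ stalls, because this map need not pull $\prod_{i<k}X_i$ back to itself, so one cannot just transport the remainder covers coordinatewise. Passing to the~$L_i$ sidesteps this; and the crucial fact that makes $L=\prod_{i<k}L_i$ still Lindel\"of --- hence normal --- is precisely that each cover $\{Z^i_n:n\in\omega\}$ is countable and that there are only countably many coordinates. Everything after that is Tietze's theorem plus the trivial heredity of $C$-embedding.
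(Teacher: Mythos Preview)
Your proof is correct. Both you and the paper begin the same way, applying Theorem~\ref{thm.Cech} to each factor to obtain countable families $\{Z^i_n:n\in\omega\}$ of zero-sets of~$\beta X_i$ with $\clbeta[X_i]A_i\setminus A_i\subseteq\bigcup_nZ^i_n\subseteq\beta X_i\setminus X_i$, but then diverge. The paper pulls these back along the natural map $f:\beta X\to\prod_{i<k}\beta X_i$, observes that the family $\{f\preim[\pi_i\preim[Z^i_n]]\}$ witnesses condition~(2) of Theorem~\ref{thm.Cech} for~$A$ in~$X$, and invokes that theorem a second time to conclude. You instead stay inside $\prod_{i<k}\beta X_i$: the complement $L=\prod_iL_i$ of all the~$\pi_i\preim[Z^i_n]$ is a countable intersection of cozero-sets in a compact space, hence Lindel\"of and normal; $A$~is visibly closed in~$L$ as a product of closed sets; Tietze gives $C$-embedding in~$L$; and restriction to $X\subseteq L$ finishes. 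Your route avoids~$\beta X$ altogether and replaces the second appeal to the characterization theorem by a direct normality argument --- in effect inlining the $(3)\Rightarrow(1)$ step of Lemma~\ref{lemma.1st}. This makes the closedness of~$A$ in the ambient Lindel\"of space immediate, whereas after pulling back to~$\beta X$ one must still argue that $\clbeta A\cap f\preim[L]=A$.

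One correction to your closing commentary: the ``frontal attack'' via $\beta X\to\prod_i\beta X_i$ is exactly the route the paper takes; it does not stall. So your argument is a genuine alternative, not a rescue of a failed approach.
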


\begin{proof}
We write $A=\prod_{i<k}A_i$ and $X=\prod_{i<k}X_i$.

For each~$i$ let $\calZ_i$ be a countable family of zero-sets in~$\beta X_i$ 
such that
$$
\clbeta[X_i] A_i\setminus A_i
       \subseteq\bigcup\calZ_i\subseteq\beta X_i\setminus X_i.
$$
Then $\cl A\setminus A$ is covered by union $\calZ$ of the families
$\{\pi_i\preim[Z]:Z\in\calZ_i\}$.
These are countable families of zero-sets in~$\prod_{i<k}\beta X_i$
and their members are contained 
in $(\prod_{i<k}\beta X_i)\setminus X$.

Let $f:\beta X\to\prod_{i<k}\beta X_i$ be the natural map.
Then $\{f\preim[Z]:Z\in\calZ\}$ is a countable family of zero-sets 
in $\beta X$.
Because $f$~is perfect its union~$\bigcup\calZ$ is contained in
$\beta X\setminus X$ and it 
contains $\clbeta A\setminus A$.  

Theorem~\ref{thm.Cech} implies that $A$ is 
Lindel\"of and $C$-embedded in~$X$.
\end{proof}

By strengthening the assumptions on the subspaces and weakening the conclusion
we get a version of this result for arbitrary products.

\begin{corollary}
Let $\langle X_\alpha:\alpha<\kappa\rangle$ be an arbitray sequence of spaces, 
and let $\langle A_\alpha:\alpha<\kappa\rangle$ be a corresponding sequence of 
$C$-embedded subspaces ($A_\alpha$ of~$X_\alpha$) that are closed, 
Lindel\"of, \v{C}ech-complete, and separable.
Then the product $A=\prod_{\alpha<\kappa}A_\alpha$ is 
$C$-embedded in~$X=\prod_{\alpha<\kappa}X_\alpha$.
\end{corollary}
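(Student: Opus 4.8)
The plan is to reduce the arbitrary product to a countable subproduct and then quote Theorem~\ref{thm.product}. Let $f\colon A\to\R$ be continuous; we must find a continuous extension over $X$. Two trivial reductions come first: if some $A_\alpha$ is empty then $A=\emptyset$ and there is nothing to do, and if $\kappa$ is finite the assertion is literally an instance of Theorem~\ref{thm.product}; so we may assume every $A_\alpha$ is nonempty and $\kappa$ is infinite.

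Since $A=\prod_{\alpha<\kappa}A_\alpha$ is a product of separable spaces, the factorization result recorded in Subsection~\ref{subsec.special} (\cite{MR1039321}*{Problem~2.7.12\,(d)}) applies to $f$: there are a countable set $E\subseteq\kappa$ and a continuous function $g\colon\prod_{\alpha\in E}A_\alpha\to\R$ with $f=g\circ\pi_E$, where $\pi_E$ is the projection of $A$ onto $\prod_{\alpha\in E}A_\alpha$; write $\pi_E$ also for the analogous projection of $X$ onto $\prod_{\alpha\in E}X_\alpha$. Reindexing $E$ by an ordinal $k\le\omega$, the family $\langle A_\alpha:\alpha\in E\rangle$ still consists of closed, Lindel\"of, \v{C}ech-complete, $C$-embedded subspaces, so Theorem~\ref{thm.product} tells us that $\prod_{\alpha\in E}A_\alpha$ is $C$-embedded in $\prod_{\alpha\in E}X_\alpha$. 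Let $G\colon\prod_{\alpha\in E}X_\alpha\to\R$ be a continuous extension of $g$, and set $\hat f=G\circ\pi_E\colon X\to\R$. This $\hat f$ is continuous, and for $x\in A$ we have $\pi_E(x)\in\prod_{\alpha\in E}A_\alpha$, so $\hat f(x)=G(\pi_E(x))=g(\pi_E(x))=f(x)$; thus $\hat f\restr A=f$.

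I do not expect a genuine obstacle here: the corollary is in effect just a repackaging of the factorization lemma and of Theorem~\ref{thm.product}. The only points that need a moment's care are the two trivial reductions above, the observation that the four relevant properties (closed, Lindel\"of, \v{C}ech-complete, $C$-embedded) are inherited by the countable subfamily $\langle A_\alpha:\alpha\in E\rangle$, and the bookkeeping in the last line, where one uses that $G$ restricted to the $E$-indexed subproduct of $A$ coincides with $g$.
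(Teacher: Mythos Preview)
Your argument is correct and follows exactly the paper's approach: factor $f$ through a countable subproduct using separability, apply Theorem~\ref{thm.product} to extend over that subproduct, and compose with the projection. The paper's proof is terser but structurally identical.
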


\begin{proof}
If $f:A\to\R$ is continuous then, by separability of the factors,
the factorization result from section~\ref{subsec.special} 
implies that $f$~factors through a countable 
subproduct~$\prod_{\alpha\in E}A_\alpha$.
The previous theorem implies that the factored map~$f_E$ has a continuous
extension~$F$ to~$\prod_{\alpha\in E}X_\alpha$.
Then $F$~determines a continous extension of~$f$ to~$X$.
\end{proof}

\begin{theorem}\label{thm.preim}
Let $A$ be a closed, Lindel\"of and \v{C}ech-complete subspace of $X$ 
that is also $C$-embedded and let $f:Y\to X$ be a perfect surjection.
Then $f\preim[A]$ is $C$-embedded in~$Y$.
\end{theorem}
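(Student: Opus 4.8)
The plan is to pull back, along the \v{C}ech--Stone extension of~$f$, a countable family of zero-sets that witnesses the $C$-embeddedness of~$A$, and then to feed this family into Theorem~\ref{thm.first} (or Theorem~\ref{thm.Cech}) on the~$Y$ side. First I would set $B=f\preim[A]$; this is a closed subset of~$Y$ because $A$~is closed. Since $f$~is a perfect surjection, its extension $\beta f:\beta Y\to\beta X$ satisfies $(\beta f)\preim[X]=Y$, equivalently $\beta f[\beta Y\setminus Y]\subseteq\beta X\setminus X$ --- this is precisely the statement that $f$~is perfect, and it is essentially the only place perfectness is used. By continuity and surjectivity we also have $\beta f[\clbeta[Y]B]\subseteq\clbeta A$, because $f[B]=A$.

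Next I would invoke Theorem~\ref{thm.Cech} for~$A$: it is closed, Lindel\"of, \v{C}ech-complete, and $C$-embedded in~$X$, so there is a countable family $\{Z_n:n\in\omega\}$ of zero-sets of~$\beta X$ with
$$
\clbeta A\setminus A\subseteq\bigcup_{n\in\omega}Z_n\subseteq\beta X\setminus X .
$$
Put $W_n=(\beta f)\preim[Z_n]$; pulling a function defining~$Z_n$ back through~$\beta f$ shows each $W_n$~is a zero-set of~$\beta Y$, and
$$
\bigcup_{n\in\omega}W_n=(\beta f)\preim\Bigl[\bigcup_{n\in\omega}Z_n\Bigr]\subseteq(\beta f)\preim[\beta X\setminus X]=\beta Y\setminus Y .
$$

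The crux is the inclusion $\clbeta[Y]B\setminus B\subseteq\bigcup_n W_n$. Suppose $p\in\clbeta[Y]B$ but $p\notin\bigcup_n W_n$. Then $\beta f(p)\in\clbeta A$ while $\beta f(p)\notin\bigcup_n Z_n$, so $\beta f(p)\in A$; hence $p\in(\beta f)\preim[X]=Y$, and $f(p)\in A$ gives $p\in f\preim[A]=B$. Thus every point of $\clbeta[Y]B\setminus B$ lies in $\bigcup_n W_n\subseteq\beta Y\setminus Y$. Consequently, for \emph{every} compact $K\subseteq\clbeta[Y]B\setminus B$ the single family $\calZ=\{W_n:n\in\omega\}$ satisfies $K\subseteq\bigcup\calZ\subseteq\beta Y\setminus Y$, so condition~(3) of Theorem~\ref{thm.first}, applied to the closed set~$B$ of~$Y$, is met. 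Therefore $B=f\preim[A]$ is Lindel\"of and $C$-embedded in~$Y$; in particular it is $C$-embedded, as required. (Alternatively, one observes that $\clbeta[Y]B\setminus B=\clbeta[Y]B\cap\bigcup_n W_n$ is an $F_\sigma$ in~$\clbeta[Y]B$, so that $B$ is \v{C}ech-complete, and then quotes Theorem~\ref{thm.Cech} directly.)

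I do not expect a real obstacle here: the work is entirely in transferring zero-sets correctly along~$\beta f$ and in the short case analysis of the previous paragraph, whose only delicate point is the appeal to the identity $(\beta f)\preim[X]=Y$. The standard facts about perfect maps used above --- existence of the extension~$\beta f$ and $(\beta f)\preim[X]=Y$ --- are what make the argument run, and it is worth isolating them cleanly at the start.
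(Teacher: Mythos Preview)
Your proof is correct and follows essentially the same approach as the paper's: obtain the countable family~$\calZ$ of zero-sets of~$\beta X$ from Theorem~\ref{thm.Cech}, pull it back along~$\beta f$, and verify the two inclusions needed to apply Theorem~\ref{thm.Cech} (or Theorem~\ref{thm.first}) on the~$Y$ side. The paper's proof is terser --- it simply asserts the two inclusions and refers back to the proof of Theorem~\ref{thm.product} --- while you spell out the case analysis for $\clbeta[Y]B\setminus B\subseteq\bigcup_n W_n$ and isolate the key fact $(\beta f)\preim[X]=Y$ explicitly, but the substance is identical.
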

\begin{proof}
The previous proof applies.
If $\calZ$~is a countable family of zero-sets of~$\beta X$ such that
$$
\clbeta A\setminus A\subseteq\bigcup\calZ\subseteq\beta X\setminus X
$$
then, because $f$~is perfect, we have
$$
\clbeta[Y] f\preim[A]\setminus f\preim[A]\subseteq
\bigcup\{f\preim[Z]:Z\in\calZ\}\subseteq\beta Y\setminus Y.\qedhere
$$ 
\end{proof}
\begin{remark}
The proofs that (1) implies~(2) and (3) implies~(1) in Theorem~\ref{thm.first}
use properties of~$\beta X$.

The proof of Theorem~\ref{thm.product} shows, implicitly, that if $A$ satisfies
condition~(3) in \emph{some} compactification of~$X$ then it will satisfy
that condition in~$\beta X$ as well.
The converse of this does not hold.

For let $X$ be an uncountable discrete space, and $A$ a countable subset 
of~$X$.
Then $A$~is --- trivially --- Lindel\"of, \v{C}ech-complete, and $C$-embedded 
in~$X$, and, in addition, $\clbeta A\setminus A$ is itself a zero-set 
of~$\beta X$.

But $A$ does not satisfy condition~(3) in the one-point 
compactification~$\alpha X$ of~$X$.
Indeed $\cl_{\alpha X}A\setminus A=\alpha X\setminus X=\{\infty\}$, where 
$\infty$~is the point at infinity, and this remainder contains no zero-set
of~$\alpha X$.
\end{remark}

\section{Examples}
\label{sec.examples}

An easy consequence of Theorem~\ref{thm.product} is that if two spaces~$X$
and~$Y$ contain closed copies, $N_1$ and $N_2$ respectively, of~$\N$ that
are $C$-embedded then the product $N_1\times N_2$ is $C$-embedded 
in~$X\times Y$. 

This can also be established in an elementary way.
There are continuous functions $f_1:X\to\R$ and $f_2:Y\to\R$ such that
$f_1$ maps~$N_1$ injectively into $\{2^n:n\in\N\}$ and
$f_2$ maps~$N_2$ injectively into $\{3^n:n\in\N\}$.
Then $f:X\times Y\to\R$, defined by $f(x,y)=f_1(x)\cdot f_2(y)$,
maps $N_1\times N_2$ injectively into~$\N$ and this suffices to ensure 
$C$-embedding.  

The countability of~$\N$ corresponds to the Lindel\"of assumption
in Theorem~\ref{thm.product}.
This assumption cannot be dropped completely as the next example shows.

\begin{example}\label{example.discrete}
Let $\kappa$ be a cardinal such that there is an uncountable closed and 
discrete subset~$D$ that is $C$-embedded in~$\R^\kappa$.
Then $D\times D$ is not $C^*$-embedded in~$\R^\kappa\times\R^\kappa$.
\end{example}

\begin{remark}
We can have $D$ of cardinality~$\aleph_1$ and with $\kappa=2^{\aleph_1}$.
For if $\lambda$~is less than the first measurable cardinal then $\lambda$,
with its discrete topology, is realcompact and so the image of~$\lambda$
under the diagonal map $e:\lambda\to\R^{C(\lambda)}$ is closed and $C$-embedded.
\end{remark}

\begin{proof}[Proof of Example~\ref{example.discrete}]
We define $f:D\times D\to[0,1]$ as follows:
\begin{itemize}
\item $f(d,e)=0$ if $d\neq e$, and
\item $d\mapsto f(d,d)$ maps into the interval $(0,1]$, 
     injectively if $|D|\le\cee$, and  
     surjectively if $|D|\ge\cee$ (and so bijectively if $|D|=\cee$).
\end{itemize}
Because $D\times D$ is discrete $f$~is automatically continuous.

Now assume $F:\R^\kappa\times\R^\kappa\to\R$ is a continuous extension of~$f$ and
let $C$ be a countable subset of~$\kappa$ such that $F$~factors through 
$\R^C\times\R^C$.
So we have a continuous map~$g:\R^C\times\R^C\to[0,1]$ such 
that $F=g\circ(\pi\times\pi)$ where $\pi:\R^\kappa\to\R^C$ ~is the projection.

Let $E=\pi[D]$ and observe that $E$~is uncountable.
If $|D|\le\cee$ then $\pi$~is a bijection between $D$ and~$E$ because
$d\mapsto g(\pi(d),\pi(d))$ is injective, and if $|D|\ge\cee$ 
then $g[E\times E]=[0,1]$, so $|E|\ge\cee$ as well.

Now let $e\in E$, then $g(e,e)>0$ and so there is a neighbourhood $U$ of~$e$
in~$\R^C$ such that $g(x,y)>0$ for all $x,y\in U$.
We claim that $U\cap E=\{e\}$.
Indeed, if $x\in U$ and $x\neq e$ then $g(x,e)>0$ whereas $g(y,e)=0$ whenever
$y\in E$ and $y\neq e$.

It follows that $E$~is an uncountable relatively discrete subset of the
separable and metrizable space~$\R^C\times\R^C$, a clear impossibility.
\end{proof}

The assumption that the factors be \v{C}ech-complete cannot be dropped either.

\begin{theorem}
The product $\Q\times\Q$ is not $C^*$-embedded in $\R^{C(\Q)}\times\R^{C(\Q)}$,
where $\Q$~is embedded in $\R^{C(\Q)}$ via the diagonal embedding
$e:\Q\to\R^{C(\Q)}$. \label{thm.QxQ}
\end{theorem}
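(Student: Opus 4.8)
The plan is to exploit the same mechanism as in Example~\ref{example.discrete} but adapted to the non-discrete, countable, metrizable setting. Write $e\colon\Q\to\R^{C(\Q)}$ for the diagonal embedding and note that $e[\Q]$ is a closed copy of~$\Q$ (as $\Q$ is Lindel\"of, hence realcompact). I would try to produce a bounded continuous $f\colon\Q\times\Q\to\R$ whose graph-concentrated behaviour forces any extension $F\colon\R^{C(\Q)}\times\R^{C(\Q)}\to\R$ to detect an uncountable (or otherwise impossible) configuration inside a separable metrizable space. The natural candidate is a function that is $0$ off the diagonal of $\Q\times\Q$ and has prescribed positive values on the diagonal, but since $\Q\times\Q$ is not discrete we cannot make $f$ vanish off the diagonal and be continuous; so instead I would use the metric: let $\rho$ be a bounded metric on $\Q$ compatible with its topology and set $f(p,q)=\rho(p,q)$, or a bump-like variant thereof. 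The point is that $f$ vanishes exactly on the diagonal $\Delta_\Q=\{(q,q):q\in\Q\}$, which is a closed copy of $\Q$ sitting in $\Q\times\Q$.

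First I would suppose, toward a contradiction, that $F\colon\R^{C(\Q)}\times\R^{C(\Q)}\to\R$ is a continuous extension of $f$. By the factorization result from Section~\ref{subsec.special} (the factors $\R$ are separable), there is a countable $C\subseteq C(\Q)$ and a continuous $g\colon\R^C\times\R^C\to\R$ with $F=g\circ(\pi\times\pi)$, where $\pi\colon\R^{C(\Q)}\to\R^C$ is the projection. Let $E=\pi[e[\Q]]\subseteq\R^C$; then $E$ is a countable dense-in-itself separable metrizable space. The restriction $\pi\restr e[\Q]$ is a continuous bijection onto $E$ iff $g(\pi(x),\pi(x))=f(x,x)$ forces injectivity — here $f$ vanishes on the whole diagonal, so that is automatic and I would instead track the \emph{off-diagonal} values. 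The key observation to push is that $g$ restricted to $E\times E$ equals $\rho$ transported along $\pi^{-1}$: if $\pi$ were injective on $e[\Q]$ this would make $(E,g\restr E\times E)$ a metric space homeomorphic to $\Q$ sitting inside $\R^C$, and Lavrentieff's theorem would extend this homeomorphism to a homeomorphism between $G_\delta$-subsets of two completely metrizable spaces — giving $\Q$ a $G_\delta$ (hence completely metrizable, hence Baire) realization, a contradiction since $\Q$ is of first category in itself. If $\pi$ is not injective on $e[\Q]$, then $E$ is a proper countable quotient and $g\restr E\times E$ is not a metric but still a continuous map separating points of $E$ enough to derive the same completely-metrizable contradiction after collapsing.

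The main obstacle I expect is handling the possible non-injectivity of $\pi$ on $e[\Q]$ cleanly, and more seriously, turning "$g$ agrees with the pulled-back metric on $E\times E$" into an honest homeomorphism statement to which Lavrentieff applies. The subtlety is that $g$ is defined on all of $\R^C\times\R^C$ and we only know its values on the countable set $E\times E$; we need to extract from $g$ a \emph{topological} invariant of $\Q$ that survives in the ambient completely metrizable $\R^C$. The cleanest route is probably: show that $E$, with the subspace topology from $\R^C$, is homeomorphic to $\Q$ (using that $\pi\restr e[\Q]$ is continuous, that $g$ recovers $\rho$ on the image, and a back-and-forth/irreducibility argument to see $\pi$ is in fact injective on $e[\Q]$ — because if $\pi(p)=\pi(q)$ with $p\neq q$ then $0=g(\pi(p),\pi(q))=f(p,q)=\rho(p,q)>0$, an immediate contradiction); then $E$ is a countable dense-in-itself subset of the Polish space $\R^C$, and by Lavrentieff's theorem the identity homeomorphism $\Q\to E$ extends to a homeomorphism between $G_\delta$-subsets $\tilde A\supseteq\Q$ of $\Q^{\,\wedge}$ (say $\R$, after re-embedding $\Q$ densely) and $\tilde B\supseteq E$ of $\R^C$; since $\tilde A$ is a dense $G_\delta$ in $\R$ it is Polish and Baire, but it is also homeomorphic to a $G_\delta$ containing the non-Baire space $\Q$ densely — and in fact $\Q$ itself is never $G_\delta$ in any metrizable space containing it, so already $\tilde A=E$ cannot contain $\Q=E$ as a $G_\delta$ unless $E$ is $G_\delta$ in $\R^C$, which it is not since $\Q$ is not Polish. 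That contradiction finishes the proof; the bookkeeping to make the Lavrentieff application rigorous is where the real work lies.
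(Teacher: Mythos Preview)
Your proposed witness $f(p,q)=\rho(p,q)$ cannot work: this function \emph{does} extend to $\R^{C(\Q)}\times\R^{C(\Q)}$, so the argument collapses before the Lavrentieff step. Indeed, let $M$ be the completion of $(\Q,\rho)$; then $M$ is a completely metrizable space containing~$\Q$, and the completed metric $\hat\rho:M\times M\to\R$ is a continuous extension of~$f$. By the sufficiency direction of Lemma~\ref{X.extend}, this already produces a continuous extension of~$f$ to all of $\R^{C(\Q)}\times\R^{C(\Q)}$. (Concretely: if $\rho$ is the restriction of a bounded metric on~$\R$, the extension is immediate.) So no contradiction is available.

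Your endgame is also not a proof even on its own terms. You correctly observe that $\pi$ is injective on~$e[\Q]$ and that $E\cong\Q$, and Lavrentieff does give $G_\delta$-sets $\tilde A\supseteq\Q$ in~$\R$ and $\tilde B\supseteq E$ in~$\R^C$ with $\tilde A\cong\tilde B$. But nothing forbids this: a dense $G_\delta$ in~$\R$ is Polish and contains~$\Q$ as a dense (non-$G_\delta$) subset, and the same holds for $\tilde B\supseteq E$. Nowhere does the argument force $\Q$ or~$E$ themselves to be~$G_\delta$, so the asserted contradiction (``$E$ would have to be $G_\delta$ in~$\R^C$'') does not follow.

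The paper's proof chooses a function with a genuine topological obstruction: $f(p,q)=(q-p-\pi)/|q-p-\pi|$, which has no continuous extension across the line $L=\{y=x+\pi\}$. The point is that for \emph{every} $G_\delta$-set $A\supseteq\Q$ in~$\R$, the Baire Category Theorem forces $A\cap(A-\pi)\neq\emptyset$, hence $(A\times A)\cap L\neq\emptyset$; combined with Lavrentieff and Lemma~\ref{X.extend}, this rules out any continuous extension to $\R^{C(\Q)}\times\R^{C(\Q)}$. The missing idea in your attempt is precisely this: one needs a function whose discontinuity set is unavoidable by \emph{every} completely metrizable extension of~$\Q$, not a function (like a metric) that comes packaged with its own extension.
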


Before we give the proof we need a lemma first.

\begin{lemma}\label{X.extend}
Let $X$ be a separable and metrizable space, and 
let $f:X\times X\to\R$ be a continuous function.
Then the following two statements about~$f$ are equivalent.
\begin{enumerate}
\item $f$~has a continuous extension to the 
      product\/ $\R^{C(X)}\times\R^{C(X)}$, where we identify $X$ with its 
      image~$e[X]$ under the diagonal embedding $e:X\to\R^{C(X)}$,
      and
\item there is a completely metrizable extension~$M$ of~$X$ such that 
      $f$~has a continuous extension to~$M\times M$.
\end{enumerate}
\end{lemma}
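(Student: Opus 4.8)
The plan is to prove the two implications separately, with the bulk of the work going into $(1)\Rightarrow(2)$.

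For $(2)\Rightarrow(1)$ the idea is that a completely metrizable separable space embeds, as a $G_\delta$-set, into a countable power of the real line; more concretely, since $X$ is separable metrizable, $C(X)$ contains a countable subset $\{f_n:n\in\omega\}$ that already separates points and closed sets, so the diagonal map using these $f_n$ embeds~$X$ into~$\R^\omega$, and this embedding extends to an embedding of any completely metrizable~$M\supseteq X$ into~$\R^\omega$ with $M$~landing on a $G_\delta$-set (closed in $\R^\omega$ if $M$ is Polish; in general one uses that a completely metrizable subspace of a metrizable space is a $G_\delta$). The projection $\R^{C(X)}\to\R^\omega$ onto these coordinates, composed with the given continuous extension on~$M\times M$ and the retraction-type argument, then yields the extension to $\R^{C(X)}\times\R^{C(X)}$; alternatively one simply observes that $M\times M$ is completely metrizable and the embedding $X\hookrightarrow\R^{C(X)}$ factors through $M\hookrightarrow\R^\omega\subseteq\R^{C(X)}$, so an extension over $M\times M$ composed with the projection does the job. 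I expect this direction to be essentially bookkeeping.

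The substantive direction is $(1)\Rightarrow(2)$. Suppose $F:\R^{C(X)}\times\R^{C(X)}\to\R$ extends~$f$. By the factorization result from Section~\ref{subsec.special} (applied to the product of separable — indeed second countable — factors $\R$), there is a countable $C\subseteq C(X)$ and a continuous $g:\R^{C}\times\R^{C}\to\R$ with $F=g\circ(\pi_C\times\pi_C)$. The plan is to take $M$ to be the closure of~$\pi_C[X]$ in~$\R^C$ — no, rather: let $Y=\pi_C[X]\subseteq\R^C$, which is separable metrizable, and note $f$ factors as $\bar f\circ(\pi_C\restr X\times\pi_C\restr X)$ where $\bar f=g\restr(Y\times Y)$, \emph{provided} $\pi_C\restr X$ is injective; to guarantee injectivity we first enlarge $C$ so that it contains a countable point-separating family, which we may do since $X$ is second countable. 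So we may assume $\pi_C\restr X:X\to Y$ is a homeomorphism onto its image. Now $g$ is continuous on the completely metrizable space $\R^C\times\R^C$, hence its restriction to the $G_\delta$-set on which it agrees with~$f$ after identification... the point is to find a completely metrizable $M$ with $Y\subseteq M\subseteq\R^C$ such that $g\restr(M\times M)$ still extends~$f$: but any $G_\delta$-set $M$ with $Y\subseteq M\subseteq\R^C$ is completely metrizable (being a $G_\delta$ in a completely metrizable space), and $g\restr(M\times M)$ automatically extends $\bar f=g\restr(Y\times Y)$, which is $f$ under the identification. Taking $M=\R^C$ itself would even work were $\R^C$ not the issue — but $\R^C$ is completely metrizable, so in fact $M=\R^C$ suffices, giving a completely metrizable extension of (the homeomorph of)~$X$ over which~$f$ extends.

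The main obstacle, and the point requiring care, is the identification of~$X$ with a subspace of~$\R^C$: one must ensure the chosen countable $C$ both carries the factorization of~$F$ \emph{and} separates points of~$X$, so that $\pi_C\restr X$ is an embedding and "extension over $M\times M$" genuinely means an extension of~$f$. Handling this amounts to noting that enlarging a countable index set keeps the factorization valid (a continuous function factoring through a countable subproduct also factors through any larger countable subproduct) and that second countability of~$X$ supplies a countable point-and-closed-set-separating family in~$C(X)$. Once the identification is clean, both implications reduce to the standard facts that $G_\delta$-subsets of completely metrizable spaces are completely metrizable and that products of countably many real lines are completely metrizable.
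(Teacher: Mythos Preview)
Your plan is essentially the paper's proof. For $(1)\Rightarrow(2)$ you do exactly what the paper does: factor $F$ through a countable $E\subseteq C(X)$, enlarge $E$ so that $\pi_E\circ e$ is an embedding (using second countability), and take $M=\R^E$ with the factored map as the extension.

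For $(2)\Rightarrow(1)$ your \emph{alternative} is the paper's argument, but your first suggestion has a gap and the alternative is stated with the arrows pointing the wrong way. The claim that an embedding of $X$ into $\R^\omega$ ``extends to an embedding of any completely metrizable $M\supseteq X$'' is not justified: Lavrentieff only gives you an extension to a $G_\delta$-subset of $M$, which need not be all of $M$. The correct move --- and what the paper does --- is to start from $M$, not from $X$: replace $M$ by the closure of $X$ in it so that $M$ is separable, embed $M$ as a \emph{closed} subset $h[M]$ of $\R^\omega$, extend $g$ from the closed set $h[M]\times h[M]$ to a continuous $G$ on all of $\R^\omega\times\R^\omega$, and then use the restrictions $p_n=\pi_n\circ h\restr X\in C(X)$ as coordinates to define a \emph{projection} $\Pi:\R^{C(X)}\to\R^\omega$ with $\Pi\circ e=h$ on $X$. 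The extension is $G\circ(\Pi\times\Pi)$. Your phrase ``$\R^\omega\subseteq\R^{C(X)}$'' is backwards: what makes the argument work is the projection $\R^{C(X)}\to\R^\omega$, and the step that needs $h[M]$ closed (not merely $G_\delta$) is precisely the extension of $g$ to all of $\R^\omega\times\R^\omega$.
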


\begin{proof}
Necessity: assume $F:\R^{C(X)}\times\R^{C(X)}\to\R$ is an extension of~$f$.
We can find a countable subset~$E$ of~$C(X)$ such that $F$~factors through
the partial product $\R^E\times\R^E$. 
We can enlarge~$E$, if need be, so that the projection 
$\pi_E:\R^{C(X)}\to\R^E$ is a homeomorphism on~$e[X]$, that is,
$\pi_E\circ e:X\to\R^E$ is a homeomorphic embedding;
here is where we use that $X$~is regular and second-countable.

Now let $G:\R^E\times\R^E\to\R$ be such that $F=G\circ(\pi_E\times\pi_E)$.
Then $\R^E$ is a completely metrizable extension of~$X$, and $G$~is a 
continuous extension of~$f$.  

\smallskip
Sufficiency: assume $M$ is a completely metrizable space that contains~$X$
and such that there is a continuous extension $g:M\times M\to\R$ of~$f$.
We assume $X$~is dense in~$M$ as its closure in~$M$ is completely metrizable
as well.

Then there is an embedding $h:M\to\R^\omega$ such that $h[M]$~is closed.
Because $h[M]\times h[M]$ is closed in $\R^\omega\times\R^\omega$ there is 
a continuous function~$G:\R^\omega\times\R^\omega\to\R$ that extends~$g$
(more precisely: such that $g=G\circ h$).
The countably many projections $\pi_n:\R^\omega\to\R$ yield members of~$C(M)$
via $p_n=\pi_n\circ h$.
These give us a projection $\Pi:\R^{C(X)}\to\R^\omega$ such that 
$\Pi\circ e=h$ on~$X$.
Then $G\circ(\Pi\times\Pi)$ is the extension of~$f$ 
to $\R^{C(X)}\times\R^{C(X)}$.
\end{proof}

\begin{proof}[Proof of Theorem~\ref{thm.QxQ}]
By the lemma, to show that $\Q\times\Q$ is not $C^*$-embedded in
$\R^{C(\Q)}\times\R^{C(\Q)}$ it suffices to exhibit a bounded continuous function
$f:\Q\times\Q\to\R$ that has no continuous extension to $M\times M$
whenever $M$~is a completely metrizable extension of~$\Q$.

We claim that it suffices to find a continuous function $f:\Q\times\Q\to\R$
such that there is no $G_\delta$-subset~$G$ of~$\R$ such that $f$~has
a continuous extension to~$G\times G$.
Indeed, if $M$~is an arbitrary completely metrizable extension of~$\Q$,
say with embedding $g:\Q\to M$, then Lavrentieff's theorem yields $G_\delta$-sets
$A$ in~$\R$ and $B$ in~$X$, and a homeomorphism $\bar g:A\to B$ that 
extends~$g$. 
If $\bar f$~were a continuous extension of~$f$ to~$M\times M$ then
$\bar f\circ (\bar g\times\bar g)$ would be a continuous extension of~$f$
to~$A\times A$.

To define~$f$ we let $L$ be the line in the plane with equation $y=x+\pi$.
Clearly $L$~is disjoint from~$\Q\times\Q$.
But, if $A$~is a $G_\delta$-subset of~$R$ that contains~$\Q$ 
then $(A\times A)\cap L\neq\emptyset$.
For let $A$ be such a $G_\delta$-set then both $A$ and $A-\pi$ are dense
$G_\delta$-subsets of~$\R$ and hence, by the Baire Category theorem
the intersection $B=A\cap(A-\pi)$ is also a dense $G_\delta$-set.
But if $x\in B$ then $(x,x+\pi)\in (A\times A)\cap L$.

Now define $f:\Q\times\Q\to[-1,1]$ by
$$
f(p,q)=\frac{q-p-\pi}{|q-p-\pi|}  %\sin\left(\frac1{y-x-\pi}\right)
$$
Then $f$ has no continuous extension to any point of~$L$.
\end{proof}

\begin{example}
One may wonder whether Theorem~\ref{thm.QxQ} can be proved using a 
homeomorphism between $\Q\times\Q$ and $\Q$.
The idea is that such a homeomorphism should change the geometry of
$\Q\times \Q$ too much to allow it to be extended to $\R^{C(\Q)}\times\R^{C(\Q)}$.

\medskip
We exhibit two homeomorphisms between $\Q\times\Q$ and $\Q$: one that can be 
extended and one that cannot.

\smallskip
The first comes via a direct application of Lemma~\ref{X.extend}.

We let $\calN=\N^\N$ be zero-dimensional Baire space and let $\Q$ be embedded 
in~$\calN$ as the subset~$Q$ of sequences that end in zeros,
so $Q=\{x:(\exists m)(\forall n\ge m)(x_n=0)\}$.

Now, the homeomorphism $h:\calN\times\calN\to\calN$ is obtained by interleaving
sequences and it maps $Q\times Q$ to~$Q$.
If we compose this map with an embedding~$g$ of~$\calN$ into~$\R$ that maps~$Q$
onto~$\Q$ then $g\circ h$~is an extension to~$\calN\times\calN$
of its restriction to~$Q\times Q$.

\smallskip
To obtain the second homeomorphism we take the embedding $g:\calN\to\R$
used above with $e[Q]=\Q$ and we let $N=g[\calN]$.

Then $N$~is a $G_\delta$-set that contains~$\Q$ and the composition
$G=g\circ h\circ(g^{-1}\times g^{-1})$ is a homeomorphism from $N\times N$ 
to~$N$.
In particular, $G$~is injective on the intersection of~$N\times N$
with the line~$L$ above.

Next define a homeomorphism $f:\Q\times\Q\to\Q\times\Q$ 
by $f(p,q)=(p+1,q+1)$ if $q>p+\pi$ and $f(p,q)=(p-1,q-1)$ if $q<p+\pi$.
Then the composition $G\circ f$ is still a homeomorphism between $\Q\times\Q$
and~$\Q$.
However, if $M$~is a $G_\delta$-set that contains~$\Q$ then 
$L\cap((M\cap N)\times(M\cap N))$ is nonempty and
$G\circ f$~has no continuous extension to any point in that intersection.

For let $\orpr xy$ be a point in the intersection and take two sequences 
$\bigl<\orpr{p_n}{q_n}:n\in\omega\bigr>$ and
$\bigl<\orpr{r_n}{s_n}:n\in\omega\bigr>$ in $\Q\times\Q$ that converge 
to~$\orpr xy$ and such that $q_n>p_n+\pi$ and $s_n<r_n+\pi$ for all~$n$.
Then $\lim_n(G\circ f)(p_n,q_n)=G(x+1,y+1)$
and $\lim_n(G\circ f)(r_n,s_n)=G(x-1,y-1)$.
It follows that $G\circ f$ cannot be extended to $M\times M$. \qed
\end{example}

The space~$\Q$ is very not \v{C}ech-complete.
It is natural to wonder how close to \v{C}ech-complete a separable and 
metrizable can be and still satisfy Theorem~\ref{thm.QxQ}.

We can re-use the proof of Theorem~\ref{thm.QxQ} (and Lemma~\ref{X.extend})
to get an example that is a Baire space.

\begin{example}
We take a subspace~$A$ of~$\R$ such that $\{x+\pi:x\in A\}=\R\setminus A$.
That such a space exists was established by Van Mill in~\cite{MR0660152}
in an alternative proof of Menu's theorem from~\cite{Menu} that $\R$ can 
be partitioned into two mutually homeomorphic and homogeneous subspaces. 

\smallbreak
A particularly transparent construction of a set $A$ as required was 
suggested by Jeroen Bruyning in~\cite{MR0660152}. 
Let $H$ be a Hamel base for~$\R$ over the field~$\Q$ that 
contains~$1$ and~$\pi$.

For $x\in\R$ let $x_\pi$ denote its (rational) $\pi$-coordinate with respect
to this base. 
Let $A=\{x\in\R:\lfloor x_\pi\rfloor$~is even$\}$, where 
$\lfloor\cdot\rfloor$~denotes the greatest-integer function.

\smallbreak
By the Baire category theorem the set $A$ is a Baire space.
By construction its square $A\times A$ is disjoint from the line~$L$
with equation $y=x+\pi$.
As before the function $f:A\times A\to[-1,1]$ defined by
$$
f(a,b)=\frac{b-a-\pi}{|b-a-\pi|}  
$$
has no continuous extension to any point of~$L$.

Finally let $G$ be a $G_\delta$-set that contains~$A$, 
say $G=\bigcap_{n=1}^\infty O_n$ with each $O_n$ open in~$\R$.
Since $A$~is dense (it contains~$\Q$) we find that for every~$n$ the
set $O_n$ is dense in~$\R$ and hence the difference $O_n\setminus A$ is 
dense in~$\R\setminus A$.
As $\R\setminus A$ is a Baire space we deduce that $G\setminus A$~is nonempty.
Also, $G-\pi$ contains $\R\setminus A$ so that 
$H=(G\setminus A)\cap(G-\pi)\neq\emptyset$.
As above, if $x\in H$ then $(x,x+\pi)$ is in $(G\times G)\cap L$.
\end{example}

Thus the product theorem does not (even) hold for Baire spaces that are 
separable and metrizable.

\smallbreak
There are various properties that are shared by locally compact spaces
and completely metrizable spaces, and that imply that the space is a Baire
space, see~\cite{MR0380745}.
One can ask for each of these properties whether satisfy our preservation 
theorems.

Many of these properties have in common that for (separable) metrizable spaces
they imply \v{C}ech-completeness; this means that counterexamples will
have to be non-metrizable.

\smallbreak
We show that the property of co-compactness, see also~\cite{MR0261539},
is not strong enough to guarantee that Theorem~\ref{thm.product} holds.

A space $(X,\tau)$ is \emph{co-compact} if there is a family~$\calF$ 
of $\tau$-closed sets whose $\tau$-interiors form a base for the 
given topology~$\tau$, and that at the same time forms a subbase for the
closed sets of a compact topology~$\tau_\calF$ on the set~$X$.
We note that $\tau_\calF$~will be $T_1$ but will not necessarily be Hausdorff.

In~\cite{MR0261539} it is shown that for metrizable spaces co-compactness
and \v{C}ech-com\-plete\-ness are equivalent.

\smallbreak
It turns out that the subject of Gary's first paper~\cite{MR0317286}, the
Sorgenfery line~$\Sor$, satisfies Theorem~\ref{thm.QxQ} too.
It is well-known that $\Sor$~is Lindel\"of, and it is readily seen to
be co-compact: let $\calF$ be the family of closed and bounded 
intervals in~$\R$.

The proof of Theorem~\ref{thm.QxQ} for~$\Sor$ rests on the following 
observation about the subset $D=\{\orpr xy:x+y\ge0\}$ of the plane.

\begin{lemma}
Let $\tau$ be a topology on~$\R$ such that $D$~is open in the plane with
respect to the product topology from~$\tau$.
Then for every $a\in\R$ the set $[a,\infty)$ belongs to~$\tau$, and
hence $\tau$~is not second-countable.  
\end{lemma}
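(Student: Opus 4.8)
The plan is to exploit the \emph{boundary} of the half-plane~$D$. Fix $a\in\R$; I will show that every $b\in[a,\infty)$ lies in the $\tau$-interior of~$[a,\infty)$, which gives $[a,\infty)\in\tau$. The point to examine is $\orpr{b}{-a}$: since $b\ge a$ we have $b+(-a)=b-a\ge0$, so $\orpr{b}{-a}\in D$.

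Because $D$ is open in the product of $(\R,\tau)$ with itself, there are $\tau$-open sets $U$ and~$V$ with $b\in U$, $-a\in V$, and $U\times V\subseteq D$. Now $-a\in V$, so for every $x\in U$ the pair $\orpr{x}{-a}$ lies in~$D$, that is $x+(-a)\ge0$, that is $x\ge a$. Hence $U\subseteq[a,\infty)$, and since $b\in U$ this witnesses that $b$ is in the $\tau$-interior of~$[a,\infty)$. As $b\in[a,\infty)$ was arbitrary, $[a,\infty)\in\tau$, and this works uniformly in~$a$.

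For the second assertion I argue by contradiction: suppose $\tau$ has a countable base~$\mathcal{B}$. For each $a\in\R$ the set $[a,\infty)$ is $\tau$-open and contains~$a$, so we may choose $B_a\in\mathcal{B}$ with $a\in B_a\subseteq[a,\infty)$. Every element of $B_a$ is then $\ge a$, while $a\in B_a$, so $a=\min B_a$. Therefore the assignment $a\mapsto B_a$ is injective, which forces $\mathcal{B}$ to have cardinality at least~$\cee$, contradicting its countability.

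The argument is short, and there is no real obstacle; the only step that wants care is the choice of the second coordinate $-a$ in $\orpr{b}{-a}$. Taking a point in the \emph{interior} of~$D$ (say $\orpr{b}{c}$ with $b+c>0$) would only yield $U\subseteq(a',\infty)$ for some $a'<a$, which is too weak; using the boundary point $\orpr{b}{-a}$ is exactly what pins the neighbourhood of~$b$ inside $[a,\infty)$ and no larger ray. I would simply record the two displays above and the minimum-element observation.
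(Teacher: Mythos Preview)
Your argument is correct and follows essentially the same route as the paper: both exploit a point of the form $\orpr{b}{-a}$ (the paper uses the special case $b=a$, i.e., the diagonal boundary point $\orpr{x}{-x}$, and then takes a union over $x\ge a$) to force the first-coordinate neighbourhood into $[a,\infty)$, and both finish with the standard Sorgenfrey-line minimum-element argument against second-countability. Your version is marginally more direct in that it avoids the explicit union, but the underlying idea is identical.
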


\begin{proof}
Let $x\in\R$ and let $U$ and $V$ be members of~$\tau$ such that 
$\orpr x{-x}\in U\times V\subseteq D$.
We claim that $U\subseteq[x,\infty)$ (and by symmetry $V\subseteq[-x,\infty)$).
Indeed, let $z\in U$, then $\orpr z{-x}\in U\times V\subseteq D$
and hence $z-x\ge0$, or $z\ge x$.

Let $\langle U_x:x\in\R\rangle$ be a choice function from~$\tau$ such that
$x\in U_x\subseteq[x,\infty)$ for all~$x$.
Then $[a,\infty)=\bigcup_{x\ge a}U_x$ is in~$\tau$, for every~$a$.  

That $\tau$~is not second-countable follows as in the familiar proof that
the Sorgenfrey line is not second-countable.
\end{proof}

\begin{proof}[Proof of Theorem~\ref{thm.QxQ} for~$\Sor$]
Let $f:\Sor\times\Sor\to\R$ be the characteristic function of~$D$
and assume $F:\R^{C(\Sor)}\times\R^{C(\Sor)}\to\R$ is a continuous extension
of~$F$.
We take a countable subfamily~$E$ of~$C(\Sor)$ and a continuous function
$G:\R^E\times\R^E\to\R$ such that $F=G\circ(\pi_E\times\pi_E)$.
If we make sure that the identity function $i:\Sor\to\R$ belongs to~$E$ then
$\pi_E$ is injective on~$\Sor$ and 
$G$~restricts to the characteristic function of~$D$ on~$\Sor\times\Sor$.

It follows that $D$~is open in the topology~$\tau$ that $\Sor\times\Sor$ 
inherits from~$\R^E\times\R^E$, but the subspace~$(\Sor,\tau)$  
is second-countable.
\end{proof}

\begin{bibdiv}
\begin{biblist}

\bib{MR0380745}{article}{
   author={Aarts, J. M.},
   author={Lutzer, D. J.},
   title={Completeness properties designed for recognizing Baire spaces},
   journal={Dissertationes Math. (Rozprawy Mat.)},
   volume={116},
   date={1974},
   pages={48},
   issn={0012-3862},
   review={\MR{0380745}},
}

\bib{MR4634391}{article}{
author={Dow, Alan},
author={Hart, Klaas Pieter},
author={van Mill, Jan},
author={Vermeer, Hans},
title={Some realcompact spaces},
journal={Topology Proceedings}, 
volume={62},
date={2023}, 
pages={205--216},
note={E-published on August 25, 2023},
review={\MR{4634391}},
%date={29 November, 2022},
%note={arXiv:2211.16545 [math.GN]},
%doi={10.48550/arXiv.2211.16545}
}

\bib{arxiv:2307.07223}{article}{
author={Dow, Alan},
author={Hart, Klaas Pieter},
author={van Mill, Jan},
author={Vermeer, Hans},
title={Closed copies of $\N$ in $\R^{\omega_1}$},
date={31 October, 2023},
note={arXiv:2307.07223 [math.GN]},
doi={10.48550/arXiv.2307.07223}
}

\bib{MR1039321}{book}{
   author={Engelking, Ryszard},
   title={General Topology},
   series={Sigma Series in Pure Mathematics},
   volume={6},
   edition={2},
   note={Translated from the Polish by the author},
   publisher={Heldermann Verlag, Berlin},
   date={1989},
   pages={viii+529},
   isbn={3-88538-006-4},
   review={\MR{1039321}},
}

\bib{MR0407579}{book}{
   author={Gillman, Leonard},
   author={Jerison, Meyer},
   title={Rings of continuous functions},
   series={Graduate Texts in Mathematics, No. 43},
   note={Reprint of the 1960 edition},
   publisher={Springer-Verlag, New York-Heidelberg},
   date={1976},
   pages={xiii+300},
   review={\MR{0407579}},
}

\bib{MR0317286}{article}{
   author={Gruenhage, Gary},
   title={The Sorgenfrey line is not an elastic space},
   journal={Proc. Amer. Math. Soc.},
   volume={38},
   date={1973},
   pages={665--666},
   issn={0002-9939},
   review={\MR{0317286}},
   doi={10.2307/2038973},
}

\bib{MR0133350}{article}{
   author={Henriksen, M.},
   author={Isbell, J. R.},
   author={Johnson, D. G.},
   title={Residue class fields of lattice-ordered algebras},
   journal={Fund. Math.},
   volume={50},
   date={1961/62},
   pages={107--117},
   issn={0016-2736},
   review={\MR{0133350}},
   doi={10.4064/fm-50-2-107-117},
}

\bib{Menu}{article}{
 author={Menu, Jan},
 title={A partition of R in two homogeneous and homeomorphic parts},
 book={
   title={Abstracta. 5th Winter School on Abstract Analysis},
   editor={Frol\'ik, Zden\v{e}k},
   publisher={Czechoslovak Academy of Sciences}, 
   address={Praha},
   date={1977},
 },
 pages={43--49},
 url={http://dml.cz/dmlcz/701088},
}

\bib{MR0660152}{article}{
   author={van Mill, Jan},
   title={Homogeneous subsets of the real line},
   journal={Compositio Math.},
   volume={46},
   date={1982},
   number={1},
   pages={3--13},
   issn={0010-437X},
   review={\MR{0660152}},
   url={https://eudml.org/doc/89547},
%   url={http://www.numdam.org/item/CM_1982__46_1_3_0/},
}

\bib{MR0261539}{book}{
   title={Colloquium co-topology: 1964--1965, Mathematical Centre},
   note={Notes by J. M. Aarts; Introduction by J. de Groot},
   publisher={Stichting Mathematisch Centrum},
   address={Amsterdam},
   date={1965},
   pages={vi+48 pp. (errata insert)},
   review={\MR{0261539}},
   url={https://ir.cwi.nl/pub/7514},
}

\end{biblist}
\end{bibdiv}

\end{document}